\crefname{theorem}{Theorem}{Theorems}
\crefname{lemma}{Lemma}{Lemmas}
\crefname{claim}{Claim}{Claims}
\crefname{prop}{Proposition}{Propositions}
\newtheorem{theorem}{Theorem}
\newtheorem{claim}[theorem]{Claim}
\newtheorem{corollary}[theorem]{Corollary}
\newtheorem*{claim*}{Claim}
\theoremstyle{remark}
\newtheorem*{remark*}{Remark}
\newtheorem{remark}[theorem]{Remark}
\numberwithin{theorem}{section}
\renewcommand{\phi}{\varphi}
\newcommand{\cH}{\mathcal H}
\newcommand{\cS}{\mathcal S}
\def\1{\mathbbm{1}}
\renewcommand{\le}{\leqslant}
\renewcommand{\ge}{\geqslant}
\newcommand{\R}{\mathbb R}
\newcommand{\floor}[1]{\lfloor#1\rfloor}
\newcommand{\ang}[1]{\langle#1\rangle}
\newcommand{\bin}{\{0,1\}}
\newcommand{\supp}{\textup{supp}}
\title
{Nondegenerate hyperplane covers of the hypercube} 
\author{Lisa Sauermann}
\thanks{Institute for Applied Mathematics, University of Bonn, Germany. Email: \texttt{sauermann@iam.uni-bonn.de}. Supported by the DFG Heisenberg Program.}
\author{Zixuan Xu}
\thanks{Massachusetts Institute of Technology. Email: \texttt{zixuanxu@mit.edu}}
\begin{document}

\begin{abstract}
We consider collections of hyperplanes in $\mathbb{R}^n$ covering all vertices of the $n$-dimensional hypercube $\{0,1\}^n$, which satisfy the following nondegeneracy condition: For every $v\in \{0,1\}^n$ and every $i=1,\dots,n$, we demand that there is a hyperplane $H$ in the collection with $v\in H$ such that the variable $x_i$ appears with a non-zero coefficient in the hyperplane equation describing $H$. We prove that every collection $\mathcal{H}$ of hyperplanes in $\mathbb{R}^n$ covering $\{0,1\}^n$ with this nondegeneracy condition must have size $|\mathcal{H}|\ge n/2$.

This bound is tight up to constant factors. It generalizes a recent result concerning the intensively studied skew covers problem, which asks about the minimum possible size of a hyperplane cover of $\{0,1\}^n$ in which all variables appear with non-zero coefficients in all hyperplane equations.

As an application of our result, we also obtain an essentially tight bound for an old problem about collections of hyperplanes slicing all edges of the $n$-dimensional hypercube, in the case where all of the hyperplanes have bounded integer coefficients.

\end{abstract}
		
\maketitle

\vspace{-2em}

\section{Introduction}

A collection of (affine) hyperplanes $\cH$ in $\R^n$ is said to cover the vertices of the $n$-dimensional hypercube $\bin^n$ if for every vertex $v\in \bin^n$, there exists a hyperplane $H\in \cH$ such that $v\in H$. It is natural to ask about the minimum possible size of a hyperplane collection $\cH$ satisfying this condition, i.e., to ask how many hyperplanes are needed to cover the $n$-dimensional hypercube. It turns out that without any further restrictions the answer to this question is trivial: Two hyperplanes suffice, for example one can take the two hyperplanes given by the equations $x_1 = 0$ and $x_1 = 1$ (and it is easy to see that one hyperplane is not enough to cover all vertices of the hypercube).

However, questions of this type with various nondegeneracy conditions for $\cH$ have been studied intensively in the literature, see for example \cite{Aaronson-et-al-21,Araujo-Balogh-et-al,Clifton-Huang,IKV23skew,LinialR05,saks1993,S-Wigderson-22,Saxton-2013,YY24Essential}. In particular, the problem of determining the minimum possible size of a \emph{skew} cover of the $n$-dimensional hypercube received a lot of attention~\cite{alon88balance,IKV23skew,klein2022slicing,saks1993,essential2024,yehuda2021slicing}. A collection $\cH$ of hyperplanes in $\R^n$ covering $\bin^n$ is called a \emph{skew cover} of $\bin^n$, if for each hyperplane $H\in \cH$, all of the $n$ coordinate entries of the normal vector of $H$ are non-zero. In other words, for each of the hyperplanes $H\in \cH$, when writing down the corresponding hyperplane equation, all $n$ variables must appear with non-zero coefficients. Geometrically, this means that all hyperplanes in $\cH$ are ``skew'' in the sense of not being parallel to any of the coordinate axes.

The problem of estimating the smallest possible number of hyperplanes in a skew cover of $\bin^n$ has been discussed extensively in a survey paper by Saks~\cite[Problem~3.63]{saks1993} from 1993, and as explained in~\cite[p.~241]{saks1993}, it is not hard to show that every skew cover of $\bin^n$ must consist of at least $\Omega(\sqrt{n})$ hyperplanes. Yehuda and Yehudayoff~\cite{yehuda2021slicing} improved this lower bound to $\Omega(n^{0.51})$ via a reduction from this problem to a problem about hyperplane collections slicing all edges of the hypercube. Later, Klein~\cite{klein2022slicing} improved the lower bound for this slicing problem, implying a lower bound of $\Omega(n^{2/3}/(\log n)^{4/3})$ for the skew covers problem via the same reduction. Recently, the authors~\cite{essential2024} observed that a result of Linial and Radhakrishnan~\cite{LinialR05} from 2005 implies that any skew cover  of $\bin^n$ must consist of at least $n/2$ hyperplanes. The same observation was independently obtained by Ivanisvili, Klein, and Vershynin~\cite{IKV23skew} shortly afterwards. This lower bound of $n/2$ resolves the problem for skew covers up to constant factors. Indeed, it is easy to construct skew covers  of $\bin^n$ consisting of $n$ hyperplanes.

In this short note, we generalize the notion of a skew cover by considering the following weaker nondegeneracy condition: We consider hyperplane collections $\cH$ with the condition that for every $v\in \bin^n$ and every coordinate direction $i=1,\dots,n$, there exists a hyperplane $H\in \cH$ with $v\in H$  such that the $i$-th coordinate of the normal vector of $H$ is non-zero. This condition means that when expressing $H$ via its hyperplane equation $\ang{a,x}=b$ (where $a\in \R^n$ is the normal vector of $H$, and $b\in \R$), then the variable $x_i$ appears with a non-zero coefficient in this equation. Geometrically, the condition on $\cH$ can be rephrased as saying that for every vertex $v\in \bin^n$ of the hypercube, and every hypercube edge $e$ incident to $v$, there exists a hyperplane $H\in \cH$ that contains $v$ but not the entire edge $e$.

We show the following lower bound for the number of hyperplanes in such a nondegenerate cover.

\begin{theorem}\label{thm:main}
    Let $\cH$ be a collection of hyperplanes in $\R^n$ satisfying the following condition: for every $v\in \bin^n$ and every $i=1,\dots,n$, there exists a hyperplane $H\in \mathcal{H}$ through $v$ such that the $i$-th coordinate of the normal vector of $H$ is non-zero. Then we must have $|\cH|\ge n/2$.
\end{theorem}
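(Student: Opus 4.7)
My plan begins with the following equivalent reformulation of the nondegeneracy condition: for each coordinate $i \in [n]$, the sub-collection $\mathcal{H}_i := \{H \in \mathcal{H} : a_{H,i} \ne 0\}$ is itself a cover of $\bin^n$. This is immediate from the definition---given $v \in \bin^n$ and $i \in [n]$, the hyperplane supplied by the nondegeneracy condition lies in $\mathcal{H}(v) \cap \mathcal{H}_i$. So the theorem amounts to showing that the union $\bigcup_i \mathcal{H}_i = \mathcal{H}$ of these $n$ specific sub-covers must have size at least $n/2$.

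To turn this into a quantitative bound I intend to use the polynomial method. For each $i$, set $P_i(x) := \prod_{H \in \mathcal{H}_i} \ell_H(x)$ where $\ell_H(x) = \langle a_H, x\rangle - b_H$; then $P_i$ has degree $|\mathcal{H}_i| \le |\mathcal{H}|$ and vanishes identically on $\bin^n$, so its multilinear reduction $\widetilde{P_i}$ modulo the ideal $(x_l^2 - x_l : l \in [n])$ is zero. The resulting polynomial identities (one per $i$) give a rich family of constraints on the coefficients $a_{H,l}$ and $b_H$, the simplest being that certain $|\mathcal{H}_i| \times |\mathcal{H}_i|$ permanents of submatrices of the normal matrix $A = (a_{H,l})$ must vanish.

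An alternative route I would also try is induction on $n$: the restriction of $\mathcal{H}$ to any coordinate face $\{x_i = c\}$ is again a nondegenerate cover of $\bin^{n-1}$ in the remaining $n-1$ coordinates (the nondegeneracy of the restricted family in each surviving direction is inherited directly from the original one), so by the inductive hypothesis the restricted family contains at least $(n-1)/2$ distinct hyperplanes. The bound $|\mathcal{H}| \ge n/2$ would then follow if one can always exhibit, for some choice of $(i,c)$, a ``collapse'' in the restriction---two hyperplanes of $\mathcal{H}$ becoming equal on the face, or one becoming improper (parallel to the face, or with empty intersection with the cube vertices there)---and invoke nondegeneracy to rule out the rigid ``no-collapse'' configurations.

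The main obstacle, as I foresee it, lies in extracting the tight bound $n/2$. Whether one proceeds by combining the $n$ polynomial identities $\widetilde{P_i} = 0$ (for instance through the product polynomial $\prod_i P_i = \prod_{H \in \mathcal{H}} \ell_H^{|\mathrm{supp}(a_H)|}$, whose structure intertwines the covering and support-size information) or by the inductive restriction route, the crux is the same: one must show that no ``rigid'' no-collapse configuration with $|\mathcal{H}| < n/2$ can simultaneously satisfy all $n$ sub-covering conditions. I expect this step to require a careful use of the nondegeneracy condition beyond what is captured by any single sub-cover $\mathcal{H}_i$, perhaps by a simultaneous analysis of pairs $\mathcal{H}_i, \mathcal{H}_{i'}$ or by tracking how a fixed hyperplane contributes to multiple sub-covers indexed by its support.
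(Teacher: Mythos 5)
Both routes you sketch stall at the same place, and you flag it yourself: you have no mechanism that converts the nondegeneracy condition into a degree lower bound or a counting argument. The polynomial $P_i = \prod_{H\in\mathcal H_i}\ell_H$ vanishes on all of $\bin^n$, so the identity $\widetilde{P_i}=0$ carries no lower bound on $\deg P_i$ (indeed $x_1^2-x_1$ already vanishes on $\bin^n$ with only two linear factors). The polynomial method gives a degree lower bound only for a \emph{punctured} vanishing pattern --- vanishing on all cube vertices except one --- and your $P_i$ does not have it. The induction route has an analogous gap: after restricting to a face the hypothesis gives only $(n-1)/2$ hyperplanes, and you must manufacture a ``collapse'' every time to recover the missing half, which you do not show how to do (and which need not happen for a single carelessly chosen face).

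The paper's argument supplies exactly the missing piece, and it is structurally different from either of your routes. Pick a vertex $w$ minimizing the number $|\cH_w|$ of hyperplanes through it; WLOG $w=0$, and let $a^{(1)},\dots,a^{(m)}$ be the normal vectors of the hyperplanes in $\cH_0$. Nondegeneracy at $0$ forces $\bigcup_j\supp(a^{(j)})=\{1,\dots,n\}$. Partition $\{1,\dots,n\}$ greedily into $T_j=\supp(a^{(j)})\setminus\bigcup_{h<j}\supp(a^{(h)})$, and by pigeonhole on signs choose $T_j'\subseteq T_j$ with $|T_j'|\ge|T_j|/2$ on which the entries of $a^{(j)}$ share a sign. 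Setting $S=\bigcup_j T_j'$ gives $|S|\ge n/2$, and a one-line sign computation shows that every nonzero vertex of the subcube $Q_S=\{v\in\bin^n:\supp(v)\subseteq S\}$ is \emph{missed} by some hyperplane in $\cH_0$. Minimality of $|\cH_0|$ then forces every such vertex to be covered by $\cH\setminus\cH_0$, while $0$ itself is not --- so $\cH\setminus\cH_0$ restricts to a punctured cover of the $|S|$-cube, and Alon--F\"uredi gives $|\cH|\ge|\cH\setminus\cH_0|\ge|S|\ge n/2$. The key idea you are missing is to localize: rather than playing all of $\cH$ against all of $\bin^n$ (or against the $n$ sub-covers $\cH_i$ simultaneously), one fixes a minimum-degree vertex and uses a sign-splitting argument to produce a large coordinate subcube on which the punctured structure needed by Alon--F\"uredi appears automatically.
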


Note that the bound of $n/2$ is tight up to constant factors. Indeed, for any $n\ge 2$, one can consider the $n$ hyperplanes given by the equations $x_1+\dots+x_{n-1}-(n-1)x_n=0$ and $x_1+\dots+x_n=t$ for $t=1,\dots,n-1$. Also note that \cref{thm:main} generalizes the above-mentioned lower bound results for skew covers, since every skew cover of $ \bin^n$ automatically satisfies the condition in \cref{thm:main}.

As mentioned above, the problem of determining the minimum size of a skew cover of the hypercube $\bin^n$ is related to the problem of determining the minimum number of hyperplanes required to slice every edge of the hypercube. We say that a hyperplane $H$ in $\R^n$ slices an edge of the $n$-dimensional hypercube if $H$ contains exactly one interior point of the edge. Equivalently, when denoting the two endpoints of the edge as $v,v'\in \bin^n$, and writing $H = \{x\in \R^n\mid \ang{a,x} = b \}$, the hyperplane $H$ slices the edge $vv'$ if $\ang{a,v} - b$ and $\ang{a,v'} - b$ are both non-zero and have different signs.

Now, it is natural to ask how many hyperplanes in $\R^n$ are needed in order to slice every edge of the hypercube. More formally, what is the minimum possible size of a collection $\cH$ of hyperplanes in $\R^n$ such that every edge of the $n$-dimensional hypercube $[0,1]^n$ is sliced by at least one hyperplane in $\cH$? This problem has attracted the attention of many researchers over the past 50 years~\cite{AHLSWEDE1990137,alon88balance,EmamyKhansary86,GotsmanLinial94,grunbaum72,klein2022slicing,ONeil71,paturi1990threshold,saks1993,yehuda2021slicing}, and has applications to the study of perceptrons~\cite{ONeil71} and of threshold circuits for parity~\cite{paturi1990threshold,yehuda2021slicing}.  O'Neil showed that any  such collection $\cH$ of hyperplanes must have size $|\cH|\ge \Omega(\sqrt{n})$, which was later improved by Yehuda and Yehudayoff~\cite{yehuda2021slicing} to $|\cH|\ge \Omega(n^{0.51})$. This was further improved by Klein~\cite{klein2022slicing} to $|\cH|\ge \Omega(n^{2/3}/(\log n)^{4/3})$, and the current best lower bound is $\Omega(n^{13/19}/\log^{-32/19})$ due to the authors~\cite{sauermann2025improvedlowerboundhypercube}. On the other hand, the best known upper bound is $\lceil (5/6)n\rceil$, given by a construction of Paterson (see~\cite{saks1993}).

It is conjectured that the minimum possible size of a collection $\cH$ of hyperplanes such that every edge of the $n$-dimensional hypercube is sliced by one of these hyperplanes is on the order of $n$. So far, this conjecture is only known to be true under some very specific assumptions on the form of the hyperplanes in $\cH$. It is not hard to show that at least $n$ hyperplanes are needed if all of the hyperplanes can be described by equations in which all variables have non-negative coefficients (or, equivalently, if one can choose normal vectors for the hyperplanes, such that all entries of all of these normal vectors are non-negative), see~\cite{AHLSWEDE1990137,GotsmanLinial94}.

The conjecture that $\Omega(n)$ hyperplanes are needed in order to slice all edges of the hypercube is also known in the case where all hyperplanes in $\cH$ have normal vectors in $\{1,-1\}^n$. Equivalently, this condition means that all hyperplanes can be expressed in such a way that all variables appear with coefficients $1$ or $-1$ in all of the hyperplane equations. Paturi and Saks observed that a result of Alon, Bergmann, Coppersmith, and Odlyzko~\cite{alon88balance} implies a lower bound of $|\cH|\ge n/2$ for the size of $\cH$ in this case (Alon--Bergmann--Coppersmith--Odlyzko~\cite{alon88balance} briefly remarked that their work is related to this question, and the arguments for the deduction can be found in~\cite[Proposition~3.75]{saks1993}).

As a corollary of \cref{thm:main}, we can show that the conjecture holds for a significantly wider class of hyperplanes. More specifically, we show that $\Omega(n)$ hyperplanes are needed to slice every edge of the $n$-dimensional hypercube, if  each of the hyperplanes has a normal vector in $\{-C, \dots, C\}^n$ (for some fixed positive integer $C$). In other words, this condition means that all of the hyperplanes can be described by equations in which all variables appear with bounded integer coefficients. 

\begin{corollary}\label{cor:slicing}
    Let $C$ be a positive integer and let $\cH $ be a collection of hyperplanes in $\mathbb{R}^n$ with normal vectors in $\{-C, \dots, C\}^n$, such that every edge of the $n$-dimensional hypercube $[0,1]^n$ is sliced by a hyperplane in $\cH$. Then $|\cH|\ge n/(4C)$.
\end{corollary}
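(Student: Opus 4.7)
The plan is to deduce \cref{cor:slicing} from \cref{thm:main} by constructing, from any slicing cover $\cH$ with normal vectors in $\{-C,\dots,C\}^n$, an auxiliary hyperplane collection $\cH'$ satisfying the nondegeneracy condition of \cref{thm:main} and having size $|\cH'|\le 2C\cdot|\cH|$. Applying \cref{thm:main} to $\cH'$ will then immediately yield $|\cH|\ge n/(4C)$.

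The construction of $\cH'$ is as follows: for each $H\in\cH$, written as $\{x\in\R^n : \langle a,x\rangle = b\}$ with $a\in\{-C,\dots,C\}^n$, I add to $\cH'$ every \emph{integer-level parallel translate} $\{x\in\R^n : \langle a, x\rangle = k\}$, where $k$ ranges over integers with $b-C<k<b+C$. Since the open interval $(b-C, b+C)$ has length $2C$, it contains at most $2C$ integers, so $|\cH'|\le 2C\cdot|\cH|$.

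To verify that $\cH'$ satisfies the nondegeneracy condition, fix $v\in\bin^n$ and $i\in\{1,\dots,n\}$, and let $v'\in\bin^n$ be the neighbor of $v$ differing in the $i$-th coordinate. By the slicing hypothesis, some $H\in\cH$, say with equation $\langle a,x\rangle = b$, slices the edge $vv'$; this forces $a_i\ne 0$ (else $H$ would contain both or neither of $v, v'$), while $\langle a,v\rangle-b$ and $\langle a, v'\rangle - b$ are both nonzero and have opposite signs. Because their absolute values sum to $|\langle a, v-v'\rangle| = |a_i|\le C$, each of them has absolute value strictly less than $|a_i|$, and hence the integer $k := \langle a, v\rangle$ lies in the open interval $(b-C, b+C)$. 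Thus $\cH'$ contains the hyperplane $\{x : \langle a, x\rangle = k\}$, which passes through $v$ and whose normal vector $a$ has nonzero $i$-th coordinate, as desired. The only subtlety I anticipate is the strict inequality $|\langle a, v\rangle - b|<|a_i|$; without it one would be forced to also include $k\in\{b-C, b+C\}$, raising the count to $2C+1$ translates per hyperplane and losing the clean factor in the final bound.
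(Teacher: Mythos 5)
Your proof is correct and follows essentially the same approach as the paper: replace each hyperplane $\langle a,x\rangle=b$ by its at-most-$2C$ integer-level parallel translates in $(b-C,b+C)$, then use the slicing hypothesis together with the bound $|a_i|\le C$ to show $\langle a,v\rangle$ lands in that open interval, so $\cH'$ satisfies the nondegeneracy hypothesis of \cref{thm:main}. The only cosmetic difference is that the paper enumerates the $2C$ candidate levels explicitly as $\lfloor b\rfloor+z$ for $z\in\{-(C-1),\dots,C\}$, whereas you simply note the open interval of length $2C$ contains at most $2C$ integers.
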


Note that even the case $C=1$ of \cref{cor:slicing} already extends the class of hyperplanes with normal vectors in $\{1,-1\}^n$ for which the conjecture was previously known. Indeed, for the arguments in~\cite{alon88balance}, it is crucial that none of the hyperplane normal vectors contains any zero entries. In contrast, in our result, zero entries are allowed as well in the normal vectors of the hyperplanes.

Also note that in the problem of determining the minimum possible size of a collection of hyperplanes slicing all edges of the $n$-dimensional hypercube, one may assume without loss of generality that all hyperplanes are described by equations with integer coefficients. Indeed, given any collection $\cH$ of hyperplanes slicing all hypercube edges, one can first ``wiggle'' the hyperplanes in such a way that all coefficients in the hyperplanes become rational and then multiply the equations by the respective common denominators to obtain integer coefficients. Thus, the general problem can be reduced to the case of integer coefficients. Our result in \cref{cor:slicing} resolves the problem up to constant factors in the case of \emph{bounded} integer coefficients.

\medskip

\noindent \emph{Notation.} For a vector $a = (a_1,\dots, a_n)\in \R^n$, the support of $a$ is denoted as $\supp(a) := \{i \mid a_i\ne 0\}$.

\medskip

\noindent \textit{Acknowledgements.} We would like to thank Ting-Wei Chao for raising the question to prove an $\Omega(n)$ lower bound for the size of a collection of hyperplanes with normal vectors in $\{-1,0,1\}^n$ slicing every edge of the $n$-dimensional hypercube, which led to this note. We would also like to think the referees for carefully reading our note and providing helpful comments.

\section{Proofs}

We first present the proof of \cref{thm:main}. It is partly inspired by arguments of Linial and Radhakrishnan~\cite{LinialR05} for their lower bound for the sizes of so-called essential covers of the hypercube. However, several new ideas are required in our setting.

Like Linial and Radhakrishnan~\cite{LinialR05}, we use the following well-known result of Alon and F\"uredi~\cite{Alon1993Covering} on collections of hyperplanes covering every vertex of the hypercube except for the origin.

\begin{theorem}[\cite{Alon1993Covering}]\label{thm:alon-furedi}
        Suppose $\cH$ is a collection of hyperplanes in $\mathbb{R}^s$ covering every vertex of the hypercube $\bin^s$ except for the origin (and not covering the origin). Then we must have $|\cH|\ge s$.
    \end{theorem}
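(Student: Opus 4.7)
The plan is to prove this by the polynomial method (which is the standard Alon--Füredi argument). Suppose for contradiction that $|\cH|=m<s$, and write each hyperplane as $H_i=\{x\in\R^s:\langle a_i,x\rangle=b_i\}$ for $i=1,\dots,m$. Define
\[
P(x)\;=\;\prod_{i=1}^m\bigl(\langle a_i,x\rangle-b_i\bigr)\in\R[x_1,\dots,x_s].
\]
By construction $\deg P=m<s$. Since every $v\in\bin^s\setminus\{0\}$ lies on some $H_i$, we have $P(v)=0$ for all such $v$, while $P(0)\ne 0$ because the origin avoids all $H_i$.

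Next I would reduce $P$ to a multilinear polynomial $\widetilde P$ that agrees with $P$ on $\bin^s$, by replacing every occurrence of $x_i^k$ with $k\ge 1$ by $x_i$ (valid because $x_i^2=x_i$ for $x_i\in\{0,1\}$). The resulting $\widetilde P$ is multilinear, satisfies $\deg\widetilde P\le \deg P=m<s$, and still satisfies $\widetilde P(v)=P(v)$ for every $v\in\bin^s$; hence $\widetilde P$ vanishes on $\bin^s\setminus\{0\}$ and $\widetilde P(0)\ne 0$.

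Finally I would invoke the uniqueness of the multilinear representation of functions on $\bin^s$: the space of multilinear polynomials in $x_1,\dots,x_s$ has dimension $2^s$, which equals $|\bin^s|$, so every function $\bin^s\to\R$ has a unique multilinear polynomial representative (via the evaluation map, which is therefore a linear isomorphism). The indicator function $\mathbf 1_{v=0}$ is represented by $\prod_{i=1}^s(1-x_i)$, which has degree exactly $s$. Hence $\widetilde P$ must equal $\widetilde P(0)\cdot\prod_{i=1}^s(1-x_i)$, a polynomial of degree $s$, contradicting $\deg\widetilde P<s$.

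The only nontrivial step is the uniqueness of the multilinear representation, and this is standard (pure dimension count). Everything else is routine: the construction of $P$, the multilinearization, and reading off the contradiction from degrees. I do not anticipate a real obstacle in this proof; the key conceptual move is the classical one of packaging the hyperplanes into a single low-degree polynomial whose zero set is prescribed on $\bin^s$.
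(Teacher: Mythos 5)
Your proof is correct. The paper does not actually reprove this theorem; it cites Alon--F\"uredi and remarks that it follows easily from the Combinatorial Nullstellensatz. Your argument is the standard polynomial-method proof, phrased via the uniqueness of multilinear representations of functions on $\bin^s$ rather than by invoking the Combinatorial Nullstellensatz as a black box: you form $P=\prod_i(\ang{a_i,x}-b_i)$ of degree $m<s$, multilinearize to $\widetilde P$, and observe that any multilinear polynomial agreeing with $\widetilde P(0)\cdot\mathbf 1_{\{0\}}$ on $\bin^s$ must equal $\widetilde P(0)\prod_{i=1}^s(1-x_i)$, which has degree exactly $s$ -- contradiction. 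This is a clean, self-contained, and slightly more elementary route than going through the Nullstellensatz (it replaces that tool with a dimension count), and it proves exactly the stated result; there is no gap.
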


This theorem can easily be proved via the Combinatorial Nullstellensatz (see~\cite[Theorem~6.3]{alon-nullstellensatz99}). Let us now prove \cref{thm:main}.

\begin{proof}[Proof of \cref{thm:main}]
    Let $\cH$ be a collection of hyperplanes satisfying the condition in the theorem statement. For every $v\in \bin^n$, let $\cH_v = \{H\in \cH\mid v\in H\} \subseteq \cH$ denote the set of hyperplanes in $\cH$ passing through $v$. Now, let $w\in \bin^n$ be a vertex minimizing $|\cH_w|$. We may assume without loss of generality that $w=0$ (otherwise we may consider a change of variables replacing $x_i$ by $1-x_i$ in all of the hyperplane equations for all indices $i$ with $w_i = 1$). This means that we have $|\cH_v|\ge |\cH_w|=|\cH_0|$ for all $v\in \bin^n$.

    Let $H_1,\dots, H_m$ be the hyperplanes in $\cH_w =\cH_0$. For $j=1,\dots,m$, let $a^{(j)}$ be the normal vector of the hyperplane $H_j$, and consider the support $\supp(a^{(j)})$ of the vector $a^{(j)}$. By the assumption in the theorem statement (applied to $v=0$), we know that for every $i=1,\dots,n$, there exists $j\in \{1,\dots, m\}$ with $i\in \supp(a^{(j)})$. Thus, we have $\bigcup_{j=1}^m \supp(a^{(j)})= \{1,\dots, n\}$.
    
    Now, for each $j=1,\dots,m$, let us define $T_j = \supp(a^{(j)})\setminus \bigcup_{h=1}^{j-1} \supp(a^{(h)})$. In other words, $T_j$ is the subset of all indices $i\in\{1,\dots,n\}$ such that $a^{(j)}$ is the first vector in the sequence $a^{(1)},\dots,a^{(m)}$ whose $i$-th coordinate is non-zero. Note that $T_1,\dots, T_m$ form a partition of $\{1,\dots, n\}$ (also note that some of the sets $T_1,\dots, T_m$ may be empty).
    
    For each $j=1,\dots,m$, and every $i\in T_j\subseteq  \supp(a^{(j)})$, the $i$-th coordinate $a^{(j)}_{i}$ of the vector $a^{(j)}$ is either positive or negative. By the pigeonhole principle, for each $j=1,\dots,m$, there exists a subset $T_j'\subseteq T_j$ with $|T_j'|\ge |T_j|/2$ such that $a^{(j)}_{i}$ has the same sign for all $i\in T_j'$. Let us now define 
     \[S = \bigcup_{j=1}^m T_j',\] 
     and note that $|S| = \sum_{j=1}^{m} |T_j'|\ge \sum_{j=1}^{m} |T_j|/2=n/2$.

    We now consider the $|S|$-dimensional subcube $Q_S\subseteq \bin^n$ given by 
    \[Q_S = \{v\in \bin^n \mid \supp(v)\subseteq S\}.\]
    In other words, $Q_S$ is formed by taking all vertices $v\in \bin^n$ with $v_i=0$ for $i\in \{1,\dots,n\}\setminus S$. We show the following key claim.
    \begin{claim}\label{claim:Qs}
        For each $v\in Q_S$ with $v\ne 0$, there exists a hyperplane $H\in \cH_0$ with $v\not\in H$. 
    \end{claim}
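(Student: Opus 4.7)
The plan is to prove the claim constructively: for each non-zero $v\in Q_S$, I will exhibit a specific hyperplane in $\cH_0$ that does not contain $v$. Since $\supp(v)\subseteq S=\bigcup_{j=1}^m T_j'$ and $v\ne 0$, there is a well-defined smallest index $j^*\in\{1,\dots,m\}$ such that $\supp(v)\cap T_{j^*}'\ne\emptyset$. My claim is that $H_{j^*}$ is the desired hyperplane. Since $H_{j^*}$ passes through the origin, its equation is $\ang{a^{(j^*)},x}=0$, so it suffices to prove $\ang{a^{(j^*)},v}\ne 0$.

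To check non-vanishing, I would use that $v_i\in\{0,1\}$ and that the sets $T_1',\dots,T_m'$ are pairwise disjoint (as subsets of the partition blocks $T_1,\dots,T_m$ of $\{1,\dots,n\}$), which lets me write
\[
\ang{a^{(j^*)},v} \;=\; \sum_{i\in \supp(v)} a^{(j^*)}_i \;=\; \sum_{j=1}^{m}\ \sum_{i\in \supp(v)\cap T_j'} a^{(j^*)}_i.
\]
The argument then splits into three cases according to $j$. For $j<j^*$, the inner sum is empty by the minimality of $j^*$. For $j>j^*$, every $i\in T_j'\subseteq T_j$ satisfies $i\notin \supp(a^{(j^*)})$ by the cascade definition $T_j=\supp(a^{(j)})\setminus\bigcup_{h<j}\supp(a^{(h)})$, so each term $a^{(j^*)}_i$ equals $0$. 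Finally, for $j=j^*$, the coordinates $a^{(j^*)}_i$ with $i\in T_{j^*}'$ all share the same non-zero sign by the construction of $T_{j^*}'$, and the index set $\supp(v)\cap T_{j^*}'$ is non-empty by the definition of $j^*$; hence this single contributing inner sum is non-zero.

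Combining the three cases yields $\ang{a^{(j^*)},v}\ne 0$, as required. The main point of the argument, and the only genuinely creative step, is locating the right index $j^*$; the two structural properties established earlier, namely the cascade definition of the $T_j$'s and the same-sign selection inside each $T_j'$, are then exactly what is needed to dispose of the cases $j>j^*$ and $j=j^*$, respectively. Once $j^*$ is chosen correctly, the rest of the verification is mechanical bookkeeping, so I do not expect any serious obstacle beyond setting up this choice cleanly.
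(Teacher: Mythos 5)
Your proposal is correct and follows essentially the same approach as the paper: in fact your $j^*$ coincides with the paper's choice of $j$ (the smallest index with $\supp(a^{(j)})\cap\supp(v)\ne\varnothing$), since the cascade structure and $\supp(v)\subseteq S$ together force $\supp(a^{(j)})\cap\supp(v)\subseteq T_j'$. The only difference is presentational — you split the inner product over all blocks $T_j'$ and dispose of the cases $j<j^*$, $j>j^*$, $j=j^*$ separately, whereas the paper directly identifies $\supp(a^{(j)})\cap\supp(v)$ as a nonempty subset of $T_j'$ and applies the same-sign observation once.
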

    \begin{proof}
        Recall that $\cH_0 = \{H_{1}, \dots, H_{m}\}$, and let $j\in \{1,\dots,m\}$ be the smallest index with $\supp(a^{(j)})\cap \supp(v)\ne \varnothing$ (such an index exists as $\supp(v)\ne \varnothing$ and $\bigcup_{j=1}^m \supp(a^{(j)})= \{1,\dots, n\}$). Then we have $\supp(a^{(j)})\cap \supp(v)\subseteq \supp(a^{(j)})\setminus \bigcup_{h=1}^{j-1} \supp(a^{(h)})=T_j$. Furthermore, observe that $\supp(a^{(j)})\cap \supp(v)\subseteq \supp(v)\subseteq S$. Thus, we can conclude $\supp(a^{(j)})\cap \supp(v)\subseteq T_j\cap S= T_j'$ by the construction of $S$. This means that $a^{(j)}_{i}$ has the same sign for all $i\in \supp(a^{(j)})\cap \supp(v)\subseteq T_j'$. Therefore we must have $\ang{a^{(j)}, v}\ne 0$, as every term in the sum 
        \[\ang{a^{(j)},v} = \sum_{i\in \supp(a^{(j)})\cap \supp(v)}a^{(j)}_iv_i=\sum_{i\in \supp(a^{(j)})\cap \supp(v)}a^{(j)}_i\]
        has the same sign and $\supp(a^{(j)})\cap \supp(v)\ne \varnothing$. On the other hand, the hyperplane $H_j$ is described by the equation $\ang{a^{(j)},x}=0$ (recalling that $H_j\in \cH_0$ passes through $0$). Therefore the hyperplane $H_{j}\in \cH_0$ does not contain $v$.
    \end{proof}

    As a consequence of \cref{claim:Qs}, we have the following.

    \begin{claim}\label{claim:subcube-cover}
        $\cH\setminus \cH_0$ is a set of hyperplanes covering $Q_S\setminus\{0\}$ and not covering $0$.
    \end{claim}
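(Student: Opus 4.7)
The plan is to deduce the claim by a simple counting argument combining the minimality of $|\cH_0|$ (used when we chose $w=0$) with the strict containment guaranteed by \cref{claim:Qs}. The ``not covering $0$'' part is immediate: by definition, $\cH_0$ consists of exactly those hyperplanes in $\cH$ passing through $0$, so every hyperplane in $\cH\setminus\cH_0$ avoids the origin.

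For the covering part, I would fix an arbitrary $v\in Q_S\setminus\{0\}$ and show $\cH_v\setminus \cH_0\ne \varnothing$. First I would use \cref{claim:Qs} to find some $H^*\in \cH_0$ with $v\notin H^*$, which tells us that $\cH_v\cap \cH_0$ is a \emph{proper} subset of $\cH_0$, hence $|\cH_v\cap\cH_0|<|\cH_0|$. Next I would invoke the minimality property of $w=0$, namely $|\cH_v|\ge |\cH_0|$ for every $v\in \bin^n$. Combining these two inequalities via
\[ |\cH_v\setminus \cH_0| \;=\; |\cH_v|-|\cH_v\cap \cH_0| \;\ge\; |\cH_0|-|\cH_v\cap\cH_0|\;>\;0, \]
shows that there is at least one hyperplane in $\cH\setminus \cH_0$ containing $v$, so $v$ is indeed covered. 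Since $\cH$ covers all of $\bin^n$ in particular, everything needed to conclude is already in place.

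I don't anticipate a real obstacle here: the nontrivial geometric/arithmetic content has been absorbed into \cref{claim:Qs}, and this final step is purely a pigeonhole/counting consequence of it together with the choice of $w$ as a minimizer of $|\cH_w|$. The only subtlety to flag is that the inequality $|\cH_v|\ge|\cH_0|$ must be applied precisely because we traded the covering property of the full collection $\cH$ for the more refined statement about $\cH\setminus\cH_0$; without the minimality of $|\cH_0|$, one could have $\cH_v\subseteq \cH_0$, which would prevent the argument from going through.
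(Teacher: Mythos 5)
Your proof is correct and uses exactly the same two ingredients as the paper — \cref{claim:Qs} to exclude some $H^*\in\cH_0$ and the minimality $|\cH_v|\ge|\cH_0|$ — with the only difference being that you write the counting step directly rather than as a proof by contradiction. This is essentially the same argument.
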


    \begin{proof}
        Suppose for contradiction that there exists a vertex $v\in Q_S$ with $v\ne 0$ which is not contained in any of the hyperplanes in $\cH\setminus \cH_0$. By \cref{claim:Qs}, there is a hyperplane in $\cH_0$ not containing $v$. Thus, $v$ is contained in at most $|\cH_0| - 1$ hyperplanes in $\cH$, meaning that $|\cH_v| \le |\cH_0| - 1$. This is a contradiction to the minimality of $|\cH_w|=|\cH_0|$, so every vertex $v\in Q_S\setminus \{0\}$ is covered by $\cH\setminus \cH_0$. On the other hand, by definition of $\cH_0$, none of the hyperplanes in $\cH\setminus \cH_0$ passes through $0$.
    \end{proof}

    Now we can finish the proof by applying Theorem \ref{thm:alon-furedi}. Let us consider the $|S|$-dimensional linear subspace $U\subseteq \mathbb{R}^n$ given by $U=\{x\in \mathbb{R}^n \mid \supp(x)\subseteq S\}$, and note that $Q_S=\bin^n\cap U$. Note that for each $H\in \cH\setminus \cH_0$, we obtain a hyperplane $H\cap U$ in $U$ (indeed, as $0\not\in H$, we have $H\cap U\ne U$).  Thus, taking $H\cap U$ for every $H\in \cH\setminus \cH_0$, by  \cref{claim:subcube-cover} we obtain a collection of hyperplanes in $U$ covering all points in $Q_S\setminus\{0\}$ and not covering $0$. Noting that $U\cong \mathbb{R}^{|S|}$ (and $Q_S\cong \bin^{|S|}$ under this isomorphism), by \cref{thm:alon-furedi} we have $|\cH\setminus \cH_0|\ge |S|\ge n/2$. Therefore we can conclude that $|\cH| \ge |\cH\setminus \cH_0|\ge n/2$, as desired.
\end{proof}

\begin{remark}
    In \cref{thm:main}, one can in fact obtain the stronger bound $|\cH|\ge (n/2)+1$ via the same proof. Indeed, in the proof we have $\cH_0\ne \emptyset$ and we have shown $|\cH\setminus \cH_0|\ge |S|\ge n/2$, yielding $|\cH| =|\cH\setminus \cH_0|+|\cH_0|\ge (n/2)+1$.
\end{remark}

Let us now show how \cref{cor:slicing} can be deduced from \cref{thm:main}.

\begin{proof}[Proof of \cref{cor:slicing}]
Let $\cH$ be a set of hyperplanes in $\mathbb{R}^n$ with normal vectors in $\{-C,\dots, C\}^n$ such that every edge of the $n$-dimensional hypercube is sliced by at least one of the hyperplanes in $\cH$. Consider the set of hyperplanes $\cH'$ in $\mathbb{R}^n$ constructed from $\cH$ as follows: For each hyperplane $H\in \cH$ with hyperplane equation $\ang{a,x} = b $ and normal vector $a\in \{-C,\dots, C\}^n$, let $\cS_H$ denote the set of the $2C$ hyperplanes with hyperplane equations $\ang{a,x} = \floor{b} + z$ for $z\in \{-(C-1),\dots, C\}$. Let $\cH' = \bigcup_{H\in \cH} \cS_H$, and note that then we have $|\cH'| \le 2C\cdot |\cH|$.

\begin{claim}\label{claim:reduction}
    $\cH'$ is a collection of hyperplanes satisfying the condition in \cref{thm:main}.
\end{claim}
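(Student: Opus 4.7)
The plan is to fix an arbitrary $v\in\{0,1\}^n$ and an arbitrary coordinate direction $i\in\{1,\dots,n\}$, and then produce one hyperplane in $\cH'$ that passes through $v$ and whose normal vector has nonzero $i$-th entry. The natural source is the hyperplane of $\cH$ that slices the hypercube edge at $v$ in direction $i$.

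First I would read off the required nonzero coordinate. Let $e$ denote the hypercube edge with endpoints $v$ and $v' := v \pm \mathbf{e}_i$, and let $H\in\cH$, with equation $\ang{a,x}=b$, be a hyperplane slicing $e$, which exists by hypothesis. Since $H$ slices $e$, the quantities $\ang{a,v}-b$ and $\ang{a,v'}-b$ are nonzero with opposite signs, so in particular $\ang{a,v}\neq\ang{a,v'}$. As $v'-v=\pm\mathbf{e}_i$, this forces $a_i\neq 0$, which is the coordinate nondegeneracy I need.

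Second I would identify which element of $\cS_H$ passes through $v$. The key observation is that both $\ang{a,v}$ and $\ang{a,v'}$ are integers (because $a\in\{-C,\dots,C\}^n$ and $v,v'\in\{0,1\}^n$), they differ by $|a_i|\le C$, and $b$ lies strictly between them. A short case analysis on the sign of $\ang{a,v}-\ang{a,v'}$ then yields that the integer $z := \ang{a,v}-\floor{b}$ lies in $\{-(C-1),\dots,C\}$. Consequently the hyperplane of $\cS_H$ with equation $\ang{a,x}=\floor{b}+z$ contains $v$ and has normal vector $a$, whose $i$-th coordinate is $a_i\neq 0$. This is the required hyperplane in $\cH'$, completing the proof of the claim.

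The only subtle point, and therefore the main thing to get right, is matching the asymmetric range $\{-(C-1),\dots,C\}$ chosen in the definition of $\cS_H$ to the two cases $\ang{a,v'}=\ang{a,v}\pm|a_i|$: in one case $\floor{b}-\ang{a,v}$ lives in $\{0,\dots,C-1\}$, in the other in $\{-C,\dots,-1\}$, and together these are exactly the negatives of the admissible shifts $z$. This is the reason $2C$ translates per hyperplane suffice, which in turn yields the bound $|\cH'|\le 2C|\cH|$ needed to combine with \cref{thm:main} and conclude $|\cH|\ge n/(4C)$.
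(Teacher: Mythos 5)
Your proof is correct and follows essentially the same route as the paper: extract a hyperplane $H$ slicing the edge at $v$ in direction $i$, observe $a_i\neq 0$ from the sign change, and show $\ang{a,v}$ is one of the $2C$ integer translates $\floor{b}+z$ with $z\in\{-(C-1),\dots,C\}$. The only cosmetic difference is that the paper avoids your explicit two-case analysis by directly deducing $|\ang{a,v}-b|<C$ from the strict sign change (so $\ang{a,v}\in(b-C,b+C)\cap\mathbb{Z}$), whereas you split on the sign of $\ang{a,v}-\ang{a,v'}$; both arrive at the same asymmetric range and both are sound.
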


\begin{proof}
    Let $v\in \bin^n$ be a vertex and let $i\in \{1,\dots,n\}$. There must be a hyperplane $H\in \cH$ slicing the edge adjacent to $v$ along the $i$-th coordinate direction. Let $v'$ be the other endpoint of this edge (then $v$ and $v'$ only differ in the $i$-th coordinate). Let $H$ be described by the equation $\ang{a,x} = b$  with $a\in \{-C,\dots, C\}^n$ and $b\in \R$. Since $H$ slices the edge $vv'$ of the hypercube, the vertices $v$ and $v'$ lie on different sides of the hyperplane $H$, so $\ang{a,v} - b$ and $\ang{a,v'} - b$ have different signs (meaning one of the numbers $\ang{a,v} - b$ and $\ang{a,v'} - b$ is positive, while the other one is negative). 
    On the other hand, $v$ and $v'$ differ only in the $i$-th coordinate (and the $i$-th coordinate of one of these vertices is $1$, while for the other one it is $0$), so we have $|\ang{a,v}-\ang{a,v'}|=|a_i|\le C$. Therefore we obtain
    \[|(\ang{a,v} - b) - (\ang{a,v'} - b)| = |\ang{a,v}-\ang{a,v'}| =|a_i| \le C,\]
    and since $\ang{a,v} - b$ and $\ang{a,v'} - b$ have different signs, we can conclude that $a_i\ne 0$ and $|\ang{a,v} - b|<C$ (indeed, if $|\ang{a,v} - b|\ge C$, then, due to the inequality above,  $\ang{a,v'} - b$ would need to be zero or have the same sign as $\ang{a,v} - b$). Thus, we have $\ang{a,v}\in (b-C,b+C)$, and since $a$ and $v$ are integer vectors (and $C$ is a positive integer), this implies
    \[\ang{a,v}\in (b-C,b+C)\cap \mathbb{Z}\subseteq \{\floor{b-C}+1,\dots, \floor{b+C}\}=\{\floor{b}-(C-1),\dots, \floor{b}+C\}.\]
    This means that $\ang{a,v}=\floor{b}+z$ for some $z\in \{-(C-1),\dots, C\}$, and so $v$ is contained in one of the hyperplanes in $\cS_H\subseteq \cH'$. Each of these hyperplanes has normal vector $a$, and the $i$-th coordinate of $a$ is $a_i\ne 0$. Thus, $v$ is indeed contained in a hyperplane in $\cH'$ whose normal vector has a non-zero entry in the $i$-th coordinate.
\end{proof}

By \cref{thm:main}, we have $|\cH'| \ge n/2$. Since $|\cH'|\le 2C\cdot |\cH|$ by construction, we can conclude that $|\cH|\ge n/(4C)$, as desired.
\end{proof}

\printbibliography

\end{document}